\theoremstyle{theorem}
\newtheorem{theorem}{Theorem}
\newtheorem{proposition}[theorem]{Proposition}
\newcommand*\circled[1]{\tikz[baseline=(char.base)]{
            \node[shape=circle,draw,inner sep=1pt] (char) {#1};}}
\renewcommand{\th}{{\mbox{\scriptsize{th}}}}
\begin{document}

\title{Collatz meets Fibonacci}
\markright{Collatz meets Fibonacci}
\author{Michael Albert, Bjarki Gudmundsson and Henning Ulfarsson}

\begin{abstract}
The Collatz map is defined for a positive even integer as half that integer,
and for a positive odd integer as that integer threefold, plus one. The Collatz conjecture
states that when the map is iterated the number one is eventually reached.

We study permutations that arise as sequences from this iteration. We show that
permutations of this type of length up to $14$ are enumerated by the Fibonacci
numbers. Beyond that excess permutations appear. We will explain the
appearance of these excess permutations and give an upper bound on the exact
enumeration.
\end{abstract}

\maketitle

\section{Introduction} 
\label{sec:introduction}

A well-known conjecture due to Lothar Collatz from 1937 states that when the function
\[
f(x) = \left \{
  \begin{array}{ll}
    x/2 & \mathrm{if\ } x \equiv 0 \pmod{2} \\
    3x+1 & \mathrm{if\ } x \equiv 1 \pmod{2}
  \end{array}
\right.
\]
is iterated from an initial positive integer $x$ we eventually reach
the cycle $(1,4,2)$. Although many have tried to prove the conjecture, or find a counterexample
to it, it remains open to this day. In this paper we assume that the conjecture is true.
A collection edited by Lagarias \cite{MR2663745} contains many articles related to the conjecture and its generalizations, and an annotated bibliography. One of the intriguing features of this function is that, starting from any initial value $x$, the sequence of iterates: $x, f(x), f(f(x)), \dots$, behaves at first irregularly before its eventual apparent inevitable decline from some power of 2 down to the three element cycle. In this paper we consider the initial interesting phase of these iterations when seen from a distance, i.e., only with regard to their relative values, and not the actual numbers produced.

For example, starting the iteration at 12 provides the sequence:
\[
\underbrace{12,\, 6,\, 3,\, 10,\, 5}_{\text{trace}},\, 16,\, 8,\, 4,\, 2,\, 1,\, 4,\, 2,\, 1,\, \dotsc.
\]
The sequence of numbers up until the first power of two is the interesting phase of the iteration, which we will call its  \emph{trace}. The elements of a trace are all distinct,
and viewed from a distance we might replace each element of a trace by its rank (i.e., the $i^\th$ smallest number of the trace is replaced by $i$). The resulting sequence is a permutation, and we shall call permutations produced in this manner \emph{Collatz permutations}. We denote the Collatz permutation obtained from initial value $x$ by $C(x)$. So we have: $C(12) = 5\,3\,1\,4\,2$. Clearly, the map $C$ is not one to one. For instance:
\[
1 = C(5) = C(21) = C(85) = \cdots = C((2^{2k}-1)/3) = \cdots
\]
However, as the length of $C(x)$ increases, coincidences become more rare, e.g., the only other $x \leq 1000$ with $C(x) = C(12)$ is $x = 908$. It seems natural to ask:
\emph{Among the permutations of length $n$, how many are Collatz permutations?}

Considering only those $x \leq 10^8$ for which the length of $C(x)$ is at most 7 produces Table~\ref{tab:1-7}.
\begin{table}[ht]
\centering
\begin{tabular}{cc}
Length & Collatz permutations \\
\hline
1 & 1 \\
2 & 1 \\
3 & 2 \\
4 & 3 \\
5 & 5 \\
6 & 8 \\
7 & 13
\end{tabular}
\caption{Number of Collatz permutations of length less than 8 (experimentally).}
\label{tab:1-7}
\end{table}
As the reader will have noticed the values in the table are the Fibonacci numbers. In what follows we will explain this phenomenon and show that it persists through length $14$. Beyond that point, excess permutations appear -- and we will explain how and why this occurs as well.

\section{Types of traces} 
\label{sec:types_of_traces}
The appearance of the Fibonacci numbers in the enumeration of Collatz permutations is easy to explain. The steps that occur in a trace are of two types depending on the parity of the argument. We can call them: \emph{up steps} ($x \mapsto 3x+1$ when $x$ is odd) denoted by $u$; and \emph{down steps} ($x \mapsto x/2$ when $x$ is even) denoted by $d$. Two up steps can never occur consecutively since $3x+1$ is even when $x$ is odd\footnote{Of course this is why some authors consider the ``fundamental'' steps of the Collatz iteration to be $x \mapsto (3x+1)/2$ for odd $x$, and $x \mapsto x/2$ for even $x$.}. The step types in a trace can be recovered from the resulting Collatz permutation according to the pattern of rises and descents. We call the resulting sequence of $u$'s and $d$'s the \emph{type} of the trace. As well as not containing consecutive $u$'s, the last symbol in such a sequence must be a $d$ (since there is a ``hidden'' $u$ occurring next to take us to a power of 2). As is well known, the number of such sequences of length $n$ is given by $F_n$, the $n^\th$ Fibonacci number (with $F_1 = F_2 = 1$ and $F_n = F_{n-1} + F_{n-2}$ for $n > 1$).

So, in order to show that there are at least $F_n$ Collatz permutations of length $n$ it will be enough to show that any sequence of $u$'s and $d$'s satisfying the necessary conditions above actually occurs as the type of some trace. To prove this, it is helpful to turn our focus to the end of a trace, rather than its beginning.


The final element of any trace is always a number of the form $(A-1)/3$ where $A = 2^a$ (of course $a$ must be even but we will be making further requirements shortly, so ignore this for the moment). Think of $u$ and $d$ as functions and let their inverses be denoted $U$ and $D$, that is:
\[
U(x) = u^{-1}(x) = (x-1)/3 \quad \text{ and } \quad D(x) = d^{-1}(x) = 2x.
\]
A \emph{witness} for a type, $\sigma$, is an $A = 2^a$ such that there is a trace ending at $(A-1)/3$ with type $\sigma$. For example, to find a witness for the type $uddud$ requires at least that $UDDUDU(A)$ be an integer. Note here that the ``hidden $u$'' has become explicit, and that the order of $U$'s and $D$'s in the functional application is the same as that in the type -- because the type is written in the reverse of ``normal'' function application order! Now
\[
UDDUDU(A) = \frac{8 A - 29}{27}.
\]
In order for this to be an integer requires $A = 7 \pmod{27}$, and remembering that $A = 2^a$, the least solution to this is $A = 2^{16} = 65536$. Unraveling the applications of $U$'s and $D$'s leads to $x = 19417$. The trace of $x$ is
\[
19417,\, 58252,\, 29126,\, 14563,\, 43690,\, 21845,
\]
and $C(x) = 2\,6\,4\,1\,5\,3$.

We now check that, given a type $\sigma$, it is enough to find a witness $A$ in this way (i.e., that the initial necessary integrality condition is also sufficient) and that infinitely many witnesses always exist.

\begin{proposition}
\label{pr:atLeastOne}
If a type $\sigma$ contains $k$ $u$'s then there is a single congruence of the form $A = c \pmod{3^{k+1}}$ which must be satisfied in order that a trace of type $\sigma$ ends with witness $A$. Consequently, there is a least witness $A = 2^a$ with $a \leq 2 \cdot 3^{k}$, and a general witness is of the form $2^{a + jd}$ where $j$ is a nonnegative integer and $d = 2 \cdot 3^{k}$.
\end{proposition}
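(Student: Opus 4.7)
The plan is to (i) write the composed inverse operations as a single rational function of $A$, obtaining the integrality condition as one congruence modulo $3^{k+1}$; (ii) show this single condition propagates to every intermediate value and yields correct parities; (iii) check that the congruence class is coprime to $3$ so that a power-of-two solution exists; and (iv) invoke the order of $2$ modulo $3^{k+1}$ to get the stated form.

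For (i), apply the operation sequence (the hidden $U$ followed by the reverse of $\sigma$ with $u \mapsto U$ and $d \mapsto D$) to $A$. An easy induction on the number of steps shows that the intermediate value has the form $x_i = (2^{d_i} A + b_i)/3^{u_i}$, where $u_i$, $d_i$ count the $U$'s and $D$'s applied so far and $b_i \in \mathbb{Z}$, with recursions $b_{i+1} = b_i - 3^{u_i}$ at a $U$-step and $b_{i+1} = 2 b_i$ at a $D$-step. Since the full sequence contains exactly $k+1$ $U$'s, the final value is $(2^d A + b)/3^{k+1}$ for fixed integers $d, b$, and its integrality is exactly the claimed single congruence $A \equiv c \pmod{3^{k+1}}$ with $c \equiv -b \cdot 2^{-d} \pmod{3^{k+1}}$.

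For (ii), write $x_i = N_i/3^{u_i}$ with $N_i = 2^{d_i} A + b_i$. If $x_{i+1}$ is an integer then: at a $D$-step, $x_{i+1} = 2 N_i/3^{u_i}$, and since $\gcd(2,3^{u_i}) = 1$ we get $3^{u_i} \mid N_i$; at a $U$-step, $x_{i+1} = (N_i - 3^{u_i})/3^{u_i+1}$ forces $N_i \equiv 3^{u_i} \pmod{3^{u_i+1}}$, hence again $3^{u_i} \mid N_i$. Either way $x_i$ is an integer, so downward induction from $x_m$ gives integrality throughout. The parities then follow automatically: every $U$-input is even (either $A$ itself or the output of a preceding $D$, using that $\sigma$ never has two consecutive $u$'s) and $\equiv 1 \pmod 3$ by integrality, hence $\equiv 4 \pmod 6$, so the $U$-output is odd, matching a forward $u$-step; $D$-outputs are automatically even; and positivity is automatic since each $U$-input is $\geq 4$. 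For (iii), track $b_i \pmod 3$: it starts at $0$, becomes $-1 \equiv 2$ at the hidden $U$, is unchanged by later $U$-updates (as $u_i \geq 1$ makes $3^{u_i}$ divisible by $3$), and is toggled between $1$ and $2$ by $D$-updates. Hence $b$ is never $0 \pmod 3$, so $c$ is coprime to $3$; in fact $c \equiv 1 \pmod 3$ because the hidden $U$-step forces $A \equiv 1 \pmod 3$.

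For (iv), the multiplicative group modulo $3^{k+1}$ is cyclic of order $\varphi(3^{k+1}) = 2 \cdot 3^k$, and $2$ is a primitive root there for every $k \geq 0$ (a standard fact verified by $2^2 = 4 \not\equiv 1 \pmod 9$). Therefore $2^a \equiv c \pmod{3^{k+1}}$ has a unique solution $a$ modulo $2 \cdot 3^k$; the smallest nonnegative representative satisfies $a \leq 2 \cdot 3^k$ and is automatically even (since $c \equiv 1 \pmod 3$), and the full set of witnesses is exactly $\{\,2^{a + j \cdot 2 \cdot 3^k} : j \geq 0\,\}$. I expect the main obstacle to be the sufficiency claim in (ii) together with the mod-$3$ tracking in (iii): showing that a single top-level congruence really forces integrality everywhere, and verifying that the recursion for $b \pmod 3$ never lands on $0$, are where one has to be careful with the bookkeeping.
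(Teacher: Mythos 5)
Your proposal is correct and follows essentially the same route as the paper's proof: compute $\Sigma(A)$ formally to obtain the single congruence modulo $3^{k+1}$, propagate integrality through the trace by induction, and invoke the fact that $2$ is a primitive root modulo $3^{k+1}$ to get the least witness and the arithmetic progression of exponents. You additionally spell out two details the paper leaves implicit --- that the forced parities come out right (using that $\sigma$ has no consecutive $u$'s) and that $c$ is prime to $3$ so that a power of $2$ actually lies in the required residue class --- but these are refinements of, not departures from, the same argument.
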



\begin{proof}
From $\sigma$ construct the sequence, $\Sigma$ of $U$'s and $D$'s obtained by changing each $u$ to $U$ and each $d$ to $D$ and then appending a $U$. Now compute $\Sigma(A)$ formally obtaining an expression of the form:
\[
\frac{2^m A - b}{3^{k+1}}
\]
where $m$ is the number of $d$'s in $\sigma$ and $k$ the number of $u$'s. In order that $A$ be a witness to a trace of type $\sigma$ it is necessary that this be an integer which, since $2$ and $3^{k+1}$ are relatively prime, leads to a necessary condition of the form $A = c \mod{3^{k+1}}$. To see that this condition is also sufficient suppose inductively, that $\Gamma(A) \in \mathbb{Z}$ for some suffix $\Gamma$ of $\Sigma$. If $\Gamma$ begins with $D$ then formally $\Gamma (A) = (2^k A - 2 b)/3^m$ for some $k > 0$ and integer $b$. If this is an integer then so is $d \Gamma(A) = (2^{k-1} A - b)/3^m$ which is the next element of the trace we are trying to construct. The case when $\Gamma$ begins with $U$ is easier since $u \Gamma(A) = 3 \Gamma(A) + 1$ and so is still an integer as well. To finish, note that it is well known that 2 is a primitive root modulo $3^k$ for all $k$, and the remaining claims are simple consequences of that fact.
\end{proof}

%
%

This proposition shows that every potential type has a witness and thereby proves that there are at least $F_n$ Collatz permutations of length $n$.

Table~\ref{tab:1-21} shows that there are exactly $F_n$ Collatz permutations of length $n$ for $n = 1,2, \dotsc, 14$ but for greater $n$ there are more.
		\begin{table}[ht]
			\centering
			\begin{tabular}{cc|ccc}
		    length & \#perms & length & \#perms & excess \\
		    \hline
		    $1$  & $1$   & $15$ & $611$   & $1$ \\
		    $2$  & $1$   & $16$ & $989$   & $2$ \\
		    $3$  & $2$   & $17$ & $1600$  & $3$ \\
		    $4$  & $3$   & $18$ & $2587$  & $3$ \\
		    $5$  & $5$   & $19$ & $4185$  & $4$ \\
		    $6$  & $8$   & $20$ & $6771$  & $6$ \\
		    $7$  & $13$  & $21$ & $10953$ & $7$ \\
		    $8$  & $21$  & $22$ & $17720$ & $9$ \\
			$9$  & $34$  & $23$ & $28669$ & $12$ \\
			$10$ & $55$  & $24$ & $46383$ & $15$ \\
			$11$ & $89$  & $25$ & $75044$ & $19$ \\
			$12$ & $144$ & $26$ & $121417$ & $24$ \\
			$13$ & $233$ & $27$ & $196448$ & $30$ \\
			$14$ & $377$ & $28$ & $317850$ & $39$ \\
			     &       & $29$ & $514278$ & $49$ \\
			     &       & $30$ & $832101$ & $61$ \\
			     &       & $31$ & $1346346$ & $77$ \\
			     &       & $32$ & $2178405$ & $96$
			\end{tabular}
			\caption{Number of Collatz permutations of length less than $32$}
			\label{tab:1-21}
		\end{table}

	The shortest type that is associated with more than one Collatz permutation
    is the type $\sigma = uddudududduddd$ which has the integrality condition
	$2^a = 16 \bmod 729$. The smallest solution to this equation is $a = 4$
    corresponding to the trace $9, 28, 14, 7, 22, 11, 34, 17, 52, 26, 13, 40, 20, 10, 5$ and the first permutation in Figure~\ref{tab:firstExcessPerms}.
	However, the next solution to the integrality condition is $a = 490$. The initial number corresponding to this solution produces a different permutation than
	the smaller initial number, and is shown in the second line in Figure~\ref{tab:firstExcessPerms}.
		\begin{figure}[ht]
		    \centering
		    \begin{tabular}{ccccccccccccccc}
		        \circled{$3$} & $12$ & $7$ & $2$ & $10$ & $5$ & $13$ & $8$ & $15$ & $11$ & $6$ & $14$ & $9$ & \circled{$4$} & $1$ \\
		        \circled{$4$} & $12$ & $7$ & $2$ & $10$ & $5$ & $13$ & $8$ & $15$ & $11$ & $6$ & $14$ & $9$ & \circled{$3$} & $1$
		    \end{tabular}
		    \caption{Two different permutations associated with the type $uddudududduddd$}
			\label{tab:firstExcessPerms}
		\end{figure}

	In the next section we will explain why this type gives us
	two different permutations and when this should be expected.


\section{Excess permutations} 
\label{sec:excess_permutations}
How can one type correspond to different permutations? Consider the type $\sigma = dududd$. If we are considering a potential witness $t$ for it, then the trace that would be generated is:
\[
DUDUDDU(t),\,UDUDDU(t),\,DUDDU(t), \ldots ,DU(t),\,U(t).
\]
Thought of as linear functions of $t$ these lines are shown in Figure~\ref{fig:lines}. We find a witness for the type wherever we find a vertical line $t = A$ where $A = 2^a$ and all the intersection points of $t=A$ with these lines are at integer heights. We can then read off the relative order of the lines at this point according to the order they are crossed as we move up the line $t = A$ from the $t$-axis.

\begin{figure}
	\centering
	\begin{tikzpicture}[scale=0.5]
	    \draw [<->] (0,10) -- (0,0) -- (17,0) node[right] {$t$};
	    \draw [->] (0,0) -- (0,-2);

	    \draw[domain=0:17] plot (\x, {1/3 * \x - 1/3}) node[right,above] {$\scriptstyle 7$};
	    \draw[domain=0:16] plot (\x, {2/3 * \x - 2/3}) node[right,below] {$\scriptstyle 6$};
	    \draw[domain=0:7.8333] plot (\x, {4/3 * \x - 4/9}) node[right,below] {$\scriptstyle 5$};
	    \draw[domain=0:17] plot (\x, {4/9 * \x - 7/9}) node[right,above] {$\scriptstyle 4$};
	    \draw[domain=0:13] plot (\x, {8/9 * \x - 14/9}) node[right,below] {$\scriptstyle 3$};
	    \draw[domain=0:17] plot (\x, {8/27 * \x - 23/27}) node[right,above] {$\scriptstyle 2$};
	    \draw[domain=0:17] plot (\x, {16/27 * \x - 46/27}) node[right,above] {$\scriptstyle 1$};

	    \draw[red,dashed] (15,10) -- (15,0) node[below] {$2^8$};

	    \draw[blue,thick] (25/4,10) -- (25/4,0) node[below] {$6.25$};

	\end{tikzpicture}
	\caption{Linear functions determining permutations associated with the type
	$dududd$. The labels on each line correspond to the point that the corresponding element would occur in a trace and so the permutation associated with the intersections on the line $x = 2^8$ is $4\,1\,6\,3\,7\,5\,2$.}
	\label{fig:lines}
\end{figure}

Consequently we see that if we have two potential witnesses such that there is no intersection point between two lines in the corresponding family lying between them, that they will determine the same permutation. On the other hand, if there were a witness between every pair of intersections of the lines for a type, and if these intersections all occurred at distinct abscissae, we might have up to $n \choose 2$ witnesses for any given type producing distinct permutations. However, we can rule out such a wealth of witnesses quite easily as we can show that the second potential witness always lies to the right of the rightmost intersection point among the lines. That is:

\begin{proposition}
\label{pr:atMostTwo}
For any type $\sigma$ there are at most two distinct permutations $C(x)$ arising from $x$ of type $\sigma$.
\end{proposition}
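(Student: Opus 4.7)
My proof plan is to continue the geometric viewpoint introduced in Figure~\ref{fig:lines}. For a type $\sigma$ with $k$ occurrences of $u$, the $n$ trace positions give rise to linear functions $L_1, \ldots, L_n$ of a formal witness parameter $t$, with $L_i(t) = (2^{m_i}t - b_i)/3^{k_i}$ for suitable nonnegative integers $m_i, k_i, b_i$ arising from the recursion used in the proof of Proposition~\ref{pr:atLeastOne}. At an actual witness $A = 2^a$, the trace is $L_1(A), \ldots, L_n(A)$, and the associated permutation $C(x)$ records the relative order of these values. This ordering can change with $A$ only as $A$ passes through the abscissa of a pairwise intersection of two of the $L_i$.

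The proposition therefore reduces to the single geometric claim that the second potential witness $A_1 = 2^{a_0 + 2\cdot 3^k}$ lies to the right of every positive pairwise intersection among the $L_i$. Granting this, every subsequent witness $A_j$, $j \geq 1$, likewise lies past all intersections, so each induces the same permutation---the one produced by sorting the $L_i$ by slope (the limiting order as $t\to\infty$). Combined with the possibly different permutation at $A_0$, this gives at most two permutations.

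To prove the geometric claim, I would rewrite $L_i(t) = \alpha_i(t - B_i)$ with slope $\alpha_i = 2^{m_i}/3^{k_i}$ and root $B_i = b_i/2^{m_i}$, so that the intersection of $L_i$ and $L_j$ sits at
\[
t^* = \frac{\alpha_i B_i - \alpha_j B_j}{\alpha_i - \alpha_j}.
\]
Two ingredients then control $|t^*|$: the slope gap satisfies $|\alpha_i - \alpha_j| \geq 3^{-2(k+1)}$, because $3^{k_j}2^{m_i} - 3^{k_i}2^{m_j}$ is a nonzero integer; and each root $B_i$ is bounded by an explicit function of $k$ alone via the recursion $B_n = 1$, with $B_i = B_{i+1}$ if $\sigma_i = d$ and $B_i = B_{i+1} + 3^{k_{i+1}}/2^{m_{i+1}}$ if $\sigma_i = u$, the no-two-consecutive-$u$ constraint forcing enough $d$'s after each $u$ to keep the total $u$-induced increment a bounded geometric series in $k$. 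Together with the elementary inequality $\alpha_i/(\alpha_i - \alpha_j) \leq 2^{m_i - m_j}$ whenever $\alpha_i > \alpha_j$, these bounds would yield an explicit estimate for $|t^*|$ dominated by the doubly-exponential $A_1 \geq 2^{2\cdot 3^k}$.

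The main obstacle is making the estimate uniform, particularly when $k$ is small but $\sigma$ contains a long $d$-run (so $\alpha_{\max}$ is large). The key reduction at that point would be the observation that when $\alpha_i > \alpha_j$ but $B_i \leq B_j$, the formula above already forces $t^* \leq B_j$, so only pairs $(i,j)$ with \emph{both} $\alpha_i > \alpha_j$ and $B_i > B_j$ can contribute a genuinely large crossing; for those remaining pairs one can chase the recursion more carefully to verify $t^* < A_1$ in every case.
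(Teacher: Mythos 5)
You have the right architecture, and it is the paper's: everything reduces to showing that every pairwise intersection of the trace lines has abscissa below the second witness $2^{a_0+2\cdot 3^k}$, after which all later witnesses see the lines in their limiting slope order and yield one common permutation. Your reduction to pairs with $\alpha_i>\alpha_j$ \emph{and} $B_i>B_j$ is correct (indeed $t^*-B_j=\alpha_i(B_i-B_j)/(\alpha_i-\alpha_j)\le 0$ otherwise), and your root recursion together with the geometric-series bound $B_i\le 3(3/2)^k$ forced by the no-consecutive-$u$ condition is sound. But the argument stops exactly where the proposition is decided: for the remaining pairs you only assert that ``one can chase the recursion more carefully,'' and the one tool you put forward for that purpose, $\alpha_i/(\alpha_i-\alpha_j)\le 2^{m_i-m_j}$, cannot close it. It gives $t^*\le B_j+2^{m_i-m_j}(B_i-B_j)$, and $m_i-m_j$ counts the $d$'s between the two positions, which can grow with $n$ while $k$ stays fixed (take $\sigma=d^Nud$ and the pair consisting of the first and last lines); the resulting bound is exponential in $n$ and is not dominated by $2^{2\cdot 3^k}$.

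The missing estimate is short once you expand about the steeper line's root instead: $t^*=B_i+\alpha_j(B_i-B_j)/(\alpha_i-\alpha_j)$, and since $2^{m_i}3^{k_j}-2^{m_j}3^{k_i}$ is a positive integer multiple of $2^{m_j}3^{k_j}$ (because $m_j\le m_i$ and $k_j\le k_i$), one has $\alpha_j/(\alpha_i-\alpha_j)\le 3^{k_i-k_j}\le 3^{k+1}$. Combined with your root bound this yields $t^*\le 3(3/2)^k(1+3^{k+1})$, which is below $2^{2\cdot 3^k}$ for every $k\ge 1$ (and for $k=0$ all roots equal $1$, so every crossing is at $t=1$). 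This is in substance the paper's own calculation, phrased there via $y$-intercepts: the shorter-suffix line has intercept above $-2^{p-q}$, the longer one has intercept at most $0$, and the denominator of the resulting crossing formula is a nonzero integer multiple of a power of $2$, giving $x\le 3^s\le 3^k$. So your route is essentially the paper's, but as written it has a genuine gap at the decisive inequality.
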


\begin{proof}
Suppose that $\sigma$ contains $k$ occurrences of $u$. The values of $a$ for which $A = 2^a$ are witnesses form an arithmetic progression with common difference $2 \cdot 3^k$, and in particular, the second witness is greater than $2^{2 \cdot 3^k}$. On the other hand we can crudely bound the greatest abscissa of a point of intersection between the lines arising from $\sigma$ as follows. Suppose that we have two such lines, one resulting from a suffix of $\Sigma$ (notation as above) containing $p$ occurrences of $D$ and $q$ occurrences of $U$ (call this the $pq$-line), and the other from a longer suffix containing $r$ occurrences of $D$ and $s$ occurrences of $U$ (call this the $rs$-line).  Suppose that the $y$ intercept of the $pq$-line is less than that of the $rs$-line. The $y$-intercept of the $pq$-line is certainly greater than $-2^{p-q}$, while that of the $rs$-line is less than or equal to $0$. So their intersection occurs to the left of the $x$-value determined by:
\[
\frac{2^p}{3^q} x - 2^{p-q} = \frac{2^r}{3^s} x
\]
which is
\[
x = \frac{2^{p-q}}{2^p - 2^r 3^{s-q}} 3^s
\]
In the other case, the $rs$-line has lesser $y$-intercept than the $pq$-line. Using similar bounds gives an intersection point to the left of
\[
x = \frac{2^{r-s}}{2^r - 2^p 3^{s-q}} 3^s.
\]
In both estimates the denominator is a non-zero integer multiple of $2^p$ (since $p \leq r$ and $q \leq s$ and at least one of these inequalities is strict) and hence the first factors are bounded by $2^{-q} 3^s$ and $2^{r-s -p} 3^s$ respectively. Therefore, $x \leq 3^s \leq 3^k < 2^{2 \cdot {3^k}}$.
\end{proof}

Recall that the type $\sigma = uddudududduddd$ had witnesses
$2^4, 2^{490}, \dotsc$. The greatest abscissa of an intersection point for the lines arising from this sequence is approximately
$44.04$. The first two witnesses are clearly on different sides of this intersection, explaining why we get two different Collatz permutations. 

Propositions \ref{pr:atLeastOne}  and \ref{pr:atMostTwo} show that the number of types of length $n$ lies between $F_n$ and $2 F_n$. Moreover, the arguments leading up to Proposition \ref{pr:atMostTwo} allow one to compute, given a type whether it corresponds to one or two types, and it is by these means that the data of Table \ref{tab:1-21} were derived.

To get an exact enumeration of the Collatz permutations one would need to understand which types are associated with two permutations. We call these \emph{excess creating types} (\emph{ET's}). Given an ET we can always create a new ET by prepending a $d$. This is because the extra $d$ does not alter the integrality condition and can only increase the maximum intersection point. This at least shows that the number of ET's is non-decreasing.

\section{Future work} 

As with so many aspects of the whole Collatz disease, a few answers just seem to lead to more questions. 

\begin{itemize}
\item
\item
How exactly does the number $e_n$ of ET's of length $n$ behave? The data above suggests that it might be something like ``half Fibonacci rate'' i.e., roughly proportional to the square root of the $n^\th$ Fibonacci number, or equivalently satisfy ing $e_n \sim e_{n-2} + e_{n-4}$.
\item
In our current dataset we always get $c = 16$ in the integrality conditions for ET's. This is probably just a result of our search procedure having explored only ``small'' values -- but perhaps it might be a necessary condition.
\item
We have an extrinsic way of creating the Collatz permutations: run the Collatz process and see what comes out. Is there an intrinsic way to recognize these permutations, beyond the obvious condition that they cannot contain consecutive rises?
\item
There are several other maps similar to the Collatz map and there is also the modified Collatz sequence (where $u$ is replaced with $ud$), as well as the Syracuse function where long down-steps are collapsed into one down-step. How do these analyses transfer to those contexts?
\end{itemize}

For the reader who is eager to start exploring Collatz permutations, we have a small code library supporting some basic features for working with their types. The code is written in the Sage open-source mathematics software system, but should run in Python with minor modifications. The code can be found at \url{https://github.com/SuprDewd/CollatzPermutations}.

\begin{section}{Acknowledgment.}
The authors are grateful to William Stein for access to the Sage Combinat
Cluster, supported by NSF Grant No.~DMS-0821725.
\end{section}

\begin{bibdiv}
\begin{biblist}

\bib{MR2663745}{collection}{
   title={The {U}ltimate {C}hallenge: the $3x+1$ {P}roblem},
   editor={Lagarias, Jeffrey C.},
   publisher={American Mathematical Society, Providence, RI},
   date={2010},
   pages={xiv+344},
   isbn={978-0-8218-4940-8},
   review={\MR{2663745 (2012a:11001)}},
}

\end{biblist}
\end{bibdiv}

\end{document}